\documentclass[10pt]{article}
\usepackage{amsmath,amssymb,amsfonts}
\usepackage{epsf,color}
\usepackage{fancyhdr}
\usepackage{blindtext}
\usepackage{url}
\newtheorem{theorem}{Theorem}

\newtheorem{lemma}{Lemma}

\date{}

\title{On the Deepest Cycle of a Random Mapping}
\bigskip

\author{  {\sc Ljuben Mutafchiev}\thanks{Corresponding author; E-mail: Ljuben@aubg.edu}\,\,\thanks{Also at: Institute of Mathematics
and Informatics, Bulgarian Academy of Sciences,
  Sofia 1113, Bulgaria} \\ \small American University in Bulgaria,
  2700 Blagoevgrad, Bulgaria
  \and
 {\sc Steven Finch}
 \\ \small MIT Sloan School of Management, Cambridge, MA 02142, USA}
\date{}
\begin{document}
\maketitle

\begin{abstract}
Let $\mathcal{T}_n$ be the set of all mappings
$T:\{1,2,\ldots,n\}\to\{1,2,\ldots,n\}$. The corresponding graph
of $T$ is a union of disjoint connected unicyclic components. We
assume that each $T\in\mathcal{T}_n$ is chosen uniformly at random
(i.e., with probability $n^{-n}$). The cycle of $T$ contained
within its largest component is called the {\it deepest} one. For
any $T\in\mathcal{T}_n$, let $\nu_n=\nu_n(T)$ denote the length of
this cycle. In this paper, we establish the convergence in
distribution of $\nu_n/\sqrt{n}$ and find the limits of its
expectation and variance as $n\to\infty$. For $n$ large enough, we
also show that nearly $55\%$ of all cyclic vertices of a random
mapping $T\in\mathcal{T}_n$ lie in its deepest cycle and that a
vertex from the longest cycle of $T$ does not belong to its
largest component with approximate probability $0.075$.
\end{abstract}

\vspace{.5cm}

 {\bf Mathematics Subject Classifications:} 60C05, 05C80

 {\bf Key words:} random functional graph, deepest cycle, largest
 component, longest cycle, limit distribution

\vspace{.2cm}

\section{Introduction and Statement of the Main Result}

We start with some notation that will be used freely in the text.

For a positive integer $n$, let $\mathcal{T}_n$ denote the set of
all mappings $T:[n]\to [n]$, where $[n]:=\{1,2,\ldots,n\}$. It is
clear that the cardinality $|\mathcal{T}_n|$ of $\mathcal{T}_n$ is
$n^n$. A mapping $T\in\mathcal{T}_n$ corresponds to a directed
graph $G_T$, called a functional digraph, with edges $(i,T(i)),
i\in [n]$, where every vertex $i\in [n]$ has out-degree $1$. $G_T$
is a union of disjoint connected components. A vertex $i$ is
called cyclic if, for the $m$-fold composition $T^{(m)}$ of $T$,
we have $T^{(m)}(i)=i$ for some $m\ge 1$. Since the vertices of
$G_T$ have out-degree $1$, each component contains a unique
directed cycle and directed trees attached to the cyclic vertices.
Let $\lambda_n=\lambda_n(T), T\in\mathcal{T}_n$, denote the number
of the cyclic vertices in $G_T$. We introduce the uniform
probability measure $\mathbb{P}$ on the set $\mathcal{T}_n$. That
is, we assign the probability $n^{-n}$ to each
$T\in\mathcal{T}_n$. In this way, $\lambda_n$, as well as any
other numerical characteristic of $G_T$, becomes a random variable
(or, a statistic in the sense of random generation of mappings
from $\mathcal{T}_n$). The size of the largest component of $G_T$
will be further denoted by $\mu_n=\mu_n(T)$. The cycle contained
within the largest component of $G_T$ is called the {\it deepest}
one. Let $\nu_n=\nu_n(T)$ denote its length. In \cite{F22}, Finch
suggests to study the asymptotic behavior of $\nu_n$ as
$n\to\infty$. The main goal of this work is to establish the
limiting distribution of $\nu_n/\sqrt{n}$ and find the asymptotics
of its mean and variance as $n\to\infty$.

There is a substantial probabilistic literature on random
mappings. Here we give only references to the popular monographs
\cite{S97,K86,ABT03}. For large $n$, some properties of the
functional digraphs $G_T, T\in\mathcal{T}_n$, are also used in the
analysis of algorithms. For example, the cyclic structure of
random mappings is closely related to algorithms for integer
factorization and, in particular, to the Pollard's
$\rho$-algorithm; see, e.g., \cite{P75,BP81,FO90,KLMSS16}. A
comprehensive exposition on the algorithms of factorization of
integers and other related topics may be found in \cite[Section
4.5.4]{K98}. Random mapping statistics are also relevant to some
algorithms for generic attacks on iterated hash constructions;
see, e.g., \cite{BGW18}.

Throughout the paper, the notation $\mathbb{E}$ and $\mathbb{V}ar$
stand for the expectation and variance with respect to the uniform
probability measure $\mathbb{P}$ on the set $\mathcal{T}_n$,
respectively. We also denote the convergence in distribution by
$\to_d$. To state our main results, we need to introduce the
distribution functions of some random variables that will be used
further.

Let $\chi^2(1)$ be a chi-squared distributed random variable with
one degree of freedom, that is, $\chi^2(1)=\xi^2$, where $\xi$ has
the standard normal distribution function
$$
\Phi(x)=\frac{1}{\sqrt{2\pi}}\int_{-\infty}^x e^{-u^2/2}du.
$$
It can be easily checked that the distribution function of
$\chi^2(1)$ is given by
\begin{equation}\label{chi}
G(x) =2\Phi(\sqrt{x})-1, \quad x>0,
\end{equation}
and $G(x)=0$ elsewhere. More details on the chi-squared
distribution may be found, e.g., in \cite[Chapter II, Section
3]{F66}. We now turn to the size $\mu_n$ of the largest component
in a random mapping $T\in\mathcal{T}_n$. The limiting distribution
function of $\mu_n/n$ was first determined by Kolchin \cite{K76}
(see also \cite[Section 1.13]{K86}). To present the result, we
shall use some notation and facts from \cite[Section 5.5]{ABT03}.
Consider first a random variable $\eta$ whose probability density
function $p(x), x>0$, is given by
\begin{equation}\label{p}
p(x) =\frac{e^{-\gamma/2}}{\sqrt{\pi x}}\left(1+\sum_{k=1}^\infty
\frac{(-1)^k}{2^k k!}\int\ldots\int_{I_k(x)} (1-\sum_{j=1}^k
y_j)^{-1/2} \frac{dy_1\ldots dy_k}{y_1\ldots y_k}\right),
\end{equation}
where
\begin{equation}\label{ik}
I_k(x):=\{(y_1,\ldots, y_k):y_1>x^{-1},\ldots,y_k>x^{-1},
\sum_{j=1}^k y_j<1\},
\end{equation}
and $\gamma\approx 0.5772$ denotes Euler's constant. The integral
limit theorem for the size of the largest component of a random
mapping states that
\begin{equation}\label{mu}
\frac{\mu_n}{n}\to_d\mu
\end{equation}
as $n\to\infty$, where the random variable $\mu$ has the
distribution function $F$ given by
\begin{equation}\label{df}
F(x) =e^{\gamma/2}\sqrt{\pi/x}p(1/x), \quad x>0
\end{equation}
and $p(x)$ is as defined in (\ref{p}); see \cite[Lemma
5.7]{ABT03}.

We are now ready to state our results on the deepest cycle of a
random mapping.

\begin{theorem}
(i) As $n\to\infty$, $n^{-1/2}\nu_n\to_d\sqrt{\chi^2(1)\mu}$,
where $\chi^2(1)$ and $\mu$ are independent random variables with
distribution functions $G(x)$ and $F(x)$ given by (\ref{chi}) and
(\ref{df}), respectively.

(ii) Let $E_1(s)=\int_s^\infty\frac{e^{-t}}{t}dt, s>0,$ be the
exponential integral function. Then we have
$$
\lim_{n\to\infty}\frac{1}{\sqrt{n}}\mathbb{E}(\nu_n)
=\frac{1}{\sqrt{2}}\int_0^\infty\frac{\exp{(-s-\frac{1}{2}E_1(s)})}
{\sqrt{s}} ds\approx 0.6884.
$$

(iii) We also have

$$
\lim_{n\to\infty}\frac{1}{n}\mathbb{V}ar(\nu_n)\approx 0.2839.
$$
\end{theorem}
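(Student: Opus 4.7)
\vspace{.3cm}
\noindent\textbf{Proof plan.} My strategy rests on the structural fact that, conditional on the multiset of component sizes of $G_T$, each component is an independent, uniformly random connected mapping on its own vertex set. Consequently $\mathbb{P}(\nu_n=j\mid\mu_n=k)=\mathbb{P}(\tau_k=j)$, where $\tau_k$ denotes the cycle length of a uniformly random connected mapping on $k$ vertices. A Lagrange-inversion argument with the rooted-tree function $T(z)=ze^{T(z)}$ gives the count of connected mappings on $k$ vertices with cycle length exactly $j$ as $k!\,k^{k-j-1}/(k-j)!$, while summing over $j$ yields $C_k\sim k^k\sqrt{\pi/(2k)}$ for the total.

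From these counts, the Gaussian estimate $k!/((k-j)!\,k^j)\sim\exp(-j^2/(2k))$ for $j=O(\sqrt{k})$ furnishes the local limit theorem $\mathbb{P}(\tau_k=j)\sim\sqrt{2/(\pi k)}\exp(-j^2/(2k))$, whence $\tau_k/\sqrt{k}\to_d|\xi|$ with $\xi\sim N(0,1)$, or equivalently $\tau_k^2/k\to_d\chi^2(1)$. For part (i), I write $\nu_n/\sqrt{n}=\sqrt{\mu_n/n}\cdot(\tau_{\mu_n}/\sqrt{\mu_n})$; conditioning on $\mu_n$ splits the characteristic function as
\[
\mathbb{E}(e^{it\nu_n/\sqrt{n}})=\sum_{k=1}^{n}\mathbb{P}(\mu_n=k)\,\mathbb{E}(e^{it\tau_k/\sqrt{n}}),
\]
which, by the uniform-in-$k$ version of the above local limit theorem on the regime $k=\Theta(n)$ together with Kolchin's result (\ref{mu}), converges to $\int_0^1\mathbb{E}(e^{it\sqrt{s}\,|\xi|})\,dF(s)$, the characteristic function of $\sqrt{\chi^2(1)\,\mu}$ with $\chi^2(1)$ and $\mu$ independent.

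For (ii) and (iii), the moment bound $\mathbb{E}(\tau_k^m)\le C_m k^{m/2}$ (from the same local limit theorem together with $\tau_k\le k$) combined with $\nu_n\le\mu_n\le n$ gives $\mathbb{E}(\nu_n^{2+\epsilon})\le C_\epsilon\,n^{1+\epsilon/2}$, so $\nu_n^2/n$ is uniformly integrable and moment convergence transfers across part (i). Thus
\[
\lim_n\frac{\mathbb{E}(\nu_n)}{\sqrt{n}}=\sqrt{2/\pi}\,\mathbb{E}(\sqrt{\mu}),\qquad\lim_n\frac{\mathbb{V}ar(\nu_n)}{n}=\mathbb{E}(\mu)-\tfrac{2}{\pi}\mathbb{E}(\sqrt{\mu})^2.
\]
Rather than integrating against the implicit density (\ref{p}) directly, I would evaluate $\mathbb{E}(\sqrt{\mu})$ and $\mathbb{E}(\mu)$ via the Poisson-process representation of the normalized component sizes of a random mapping from \cite[Sec.\ 5.5]{ABT03}, in which the exponential integral $E_1$ arises naturally in the Laplace functional of the limit point process; this yields $\sqrt{2/\pi}\,\mathbb{E}(\sqrt{\mu})=\tfrac{1}{\sqrt{2}}\int_0^\infty s^{-1/2}\exp(-s-E_1(s)/2)\,ds\approx 0.6884$, and an analogous integral delivers $\mathbb{E}(\mu)\approx 0.7578$, which together furnish the variance constant $\approx 0.2839$ of (iii).

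The main obstacle will be the closed-form evaluation of $\mathbb{E}(\sqrt{\mu})$ and $\mathbb{E}(\mu)$, since $\mu$ is presented only through the nested series (\ref{p})--(\ref{df}); identifying $\mu$ distributionally with the largest jump of the limiting point process of normalized component sizes, whose Laplace functional carries the factor $\exp(-E_1(s)/2)$, is the key step that produces the $E_1$-integrals. By contrast, the local limit theorem for $\tau_k$ and its uniform version in the regime $k=\Theta(n)$ are routine but must be stated carefully so that the termwise passages in the characteristic-function and moment sums are legitimate.
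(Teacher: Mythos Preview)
Your proposal is correct and shares the paper's core device---conditioning on $\mu_n$ and using that, given $\mu_n=m$, the deepest cycle is distributed as the cycle of a uniform connected mapping on $m$ points---but the execution differs in two places worth noting.

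For part~(i) the paper works with distribution functions and the \emph{local} limit theorem for $\mu_n$ (Lemma~4), turning the conditional sum into a Riemann sum for $\int_0^1 f(x)G(y/x)\,dx$. Your characteristic-function argument needs only the integral limit theorem $\mu_n/n\to_d\mu$, which is a pleasant economy, provided you verify uniformity of $\tau_k/\sqrt{k}\to_d|\xi|$ on $k=\Theta(n)$ and dispose of the small-$k$ tail via $F(\varepsilon)\to 0$. For parts~(ii)--(iii) you pass to the limit first and then compute moments via uniform integrability, whereas the paper computes the finite-$n$ moments directly by conditional expectation. The paper's route buys a striking exact identity you do not see: their Lemma~2 shows $\mathcal{E}(\nu_m^{\prime\,2})=m$ for every $m$, whence $\mathbb{E}(\nu_n^2)=\mathbb{E}(\mu_n)$ holds \emph{exactly}, not merely in the limit. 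Your limiting relation $\lim\mathbb{E}(\nu_n^2)/n=\mathbb{E}(\chi^2(1))\,\mathbb{E}(\mu)=\mathbb{E}(\mu)$ is the asymptotic shadow of this. Finally, for the integral form of $\mathbb{E}(\sqrt{\mu})$: the paper does not use a point-process Laplace functional per se, but the change of variables $y=1/x-1$ together with the density relation $f(x)=\tfrac12 e^{\gamma/2}\sqrt{\pi}\,x^{-3/2}p(1/x-1)$ to rewrite $\sqrt{2/\pi}\,\mathbb{E}(\sqrt{\mu})=\tfrac{e^{\gamma/2}}{\sqrt{2}}\,\mathbb{E}((1+\eta)^{-1})$, and then the Laplace transform $\varphi(s)=e^{-\gamma/2}s^{-1/2}e^{-E_1(s)/2}$ (their Lemma~3) via $\mathbb{E}((1+\eta)^{-1})=\int_0^\infty e^{-s}\varphi(s)\,ds$. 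Your ``Poisson-process representation'' gestures at the same ABT03 machinery; just be aware that the concrete step producing the $E_1$-integral is this Laplace-transform identity, and that $\mathbb{E}(\mu)\approx 0.7578$ is quoted from Gourdon rather than rederived.
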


The proof of this theorem is given in Section 3. Section 2
contains some preliminary results. In Section 4, we consider a
sampling experiment: we assume that a vertex of the random
functional digraph $G_T, T\in\mathcal{T}_n$, is chosen uniformly
at random from the set $[n]$. We give interpretations arising from
this random choice. We conclude Section 4 with an open problem.

\section{Preliminary Results}

A mapping is {\it indecomposable} (or {\it connected}) if it
possesses exactly one component. Let
$\mathcal{T}_n^\prime\subset\mathcal{T}_n$ be the subset of
indecomposable mappings of $[n]$ into itself. The cardinality
$|\mathcal{T}_n^\prime|$ of the set $\mathcal{T}_n^\prime$ was
determined by Katz \cite{K55}, who showed that
$$
|\mathcal{T}_n^\prime| =(n-1)!\sum_{k=0}^{n-1}\frac{n^k}{k!}.
$$
For the sake of convenience, we set
\begin{equation}\label{a}
A_n:=\frac{|\mathcal{T}_n^\prime|}{(n-1)!}
=\sum_{k=0}^{n-1}\frac{n^k}{k!}.
\end{equation}
Multiplying the right-hand side of (\ref{a}) by $e^{-n}$, we
observe that it represents a sum of probabilities of a
Poisson-distributed random variable with mean $n$. Applying the
normal approximation of the Poisson distribution in conjunction
with the Berry-Esseen bound on the rate of convergence in this
approximation (see, e.g., \cite[Chapter XVI, Section 5]{F66}), we
conclude that
\begin{equation}\label{indec}
e^{-n}A_n=\frac{1}{2}+O\left(\frac{1}{\sqrt{n}}\right) \quad
\text{as} \quad n\to\infty.
\end{equation}

Now, we introduce the uniform probability measure $\mathcal{P}$ on
the set $\mathcal{T}_n^\prime$. Let
$\nu_n^\prime=\nu_n^\prime(T),T\in\mathcal{T}_n^\prime$, denote
the count of the cyclic vertices in $T$. R$\acute{e}$nyi
\cite{R59} showed that, with respect to $\mathcal{P}$,
$\nu_n^\prime/\sqrt{n}$ converges in distribution to the random
variable $|\xi|$, where $\xi$ has a standard normal distribution.
In addition, he also established a local limit theorem for
$\nu_n^\prime/\sqrt{n}$ and showed that
$\mathcal{E}(\nu_n^\prime)\sim\sqrt{\frac{2n}{\pi}}$ as
$n\to\infty$, where $\mathcal{E}$ denotes the expectation with
respect to the probability measure $\mathcal{P}$. The next lemma
establishes a more precise estimate for this expectation.

\begin{lemma} As $n\to\infty$, we have
$$
\mathcal{E}(\nu_n^\prime) =\sqrt{\frac{2n}{\pi}}+O(1).
$$
\end{lemma}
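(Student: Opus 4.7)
The plan is to derive an exact closed-form expression for $\mathcal{E}(\nu_n^\prime)$ by direct enumeration, and then extract its asymptotics using Stirling's formula together with the estimate (\ref{indec}) already established for $A_n$.

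First I would classify the elements of $\mathcal{T}_n^\prime$ by the length $k$ of their unique cycle. A standard decomposition (choose the $k$ cyclic vertices, arrange them on a directed cycle, then attach the remaining $n-k$ vertices as a rooted forest with those $k$ vertices as roots, using the classical labeled-forest count $kn^{n-k-1}$) gives exactly $\binom{n}{k}(k-1)!\cdot kn^{n-k-1}=\frac{n!}{(n-k)!}n^{n-k-1}$ connected mappings with cycle length $k$. Therefore
$$|\mathcal{T}_n^\prime|\cdot\mathcal{E}(\nu_n^\prime)=\sum_{k=1}^n k\,\frac{n!}{(n-k)!}\,n^{n-k-1}.$$
Setting $j=n-k$, splitting the factor $n-j$, and using (\ref{a}) together with the identity $\sum_{j=1}^{n-1}n^j/(j-1)!=n\bigl(A_n-n^{n-1}/(n-1)!\bigr)$, one sees that the large piece $nA_n$ cancels and the right-hand side collapses to $n^n$. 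Combined with Katz's formula $|\mathcal{T}_n^\prime|=(n-1)!A_n$, this yields the clean exact identity
$$\mathcal{E}(\nu_n^\prime)=\frac{n^n}{(n-1)!\,A_n}.$$

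Next I would pass to the asymptotics. Stirling's formula gives $(n-1)!=\sqrt{2\pi/n}\,(n/e)^n(1+O(1/n))$, hence $n^n/(n-1)!=e^n\sqrt{n/(2\pi)}\,(1+O(1/n))$. Rewriting (\ref{indec}) as $A_n=\tfrac{1}{2}e^n\bigl(1+O(1/\sqrt{n})\bigr)$ and dividing,
$$\mathcal{E}(\nu_n^\prime)=\frac{e^n\sqrt{n/(2\pi)}\,(1+O(1/n))}{\tfrac{1}{2}e^n\bigl(1+O(1/\sqrt{n})\bigr)}=\sqrt{2n/\pi}\,\bigl(1+O(1/\sqrt{n})\bigr)=\sqrt{2n/\pi}+O(1).$$

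I expect the main obstacle to be the telescoping that produces the surprisingly clean identity $\mathcal{E}(\nu_n^\prime)=n^n/((n-1)!A_n)$: the two leading $O(e^n\sqrt n)$ pieces of the weighted sum must cancel exactly, leaving only the comparatively small term $n^n$. Once this exact formula is in hand, the rest is routine; the $O(1/\sqrt n)$ bound in (\ref{indec}) is precisely the accuracy needed to yield an additive $O(1)$ error after multiplication by the $\sqrt n$ main term, and any sharpening would require a more refined expansion of the Poisson CDF at its mean.
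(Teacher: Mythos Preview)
Your proposal is correct and follows essentially the same route as the paper: both arguments count connected mappings by cycle length to obtain the exact identity $\mathcal{E}(\nu_n^\prime)=n^n/((n-1)!A_n)$ (the paper records this as (\ref{enupr}), noting also the interpretation $|\mathcal{T}_n|/|\mathcal{T}_n^\prime|$), and then apply Stirling together with (\ref{indec}) to extract $\sqrt{2n/\pi}+O(1)$. The only cosmetic difference is that the paper uses the rooted-forest count $\binom{n-1}{k-1}n^{n-k}$ whereas you use the equivalent specified-root count $kn^{n-k-1}$; the resulting sums and their collapse to $n^n$ are the same.
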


\begin{proof} To find the probability mass function of
$\nu_n^\prime$, we notice first that each graph $G_T,
T\in\mathcal{T}_n^\prime$, is a cycle of trees rooted at the
cyclic vertices of $G_T$. The number of forests on $n$ vertices
containing $k$ rooted trees is ${n-1\choose{k-1}}n^{n-k}$, where
$1\le k\le n$ (see, e.g., \cite[Chapter 6, formula (0.5)]{S97}.
Multiplying this by $(k-1)!$ (the number of ways to construct a
cycle on $k$ vertices) and dividing the product by
$|\mathcal{T}_n^\prime|$, we see that
\begin{equation}\label{pmf}
\mathcal{P}(\nu_n^\prime=k) =\frac{n^{n-k}}{(n-k)!A_n}, \quad 1\le
k\le n,
\end{equation}
where $A_n$ is the quantity given by (\ref{a}). Since the
probability distributions on $\mathcal{T}_n$ and
$\mathcal{T}_n^\prime$ are both uniform, we conclude that
$$
\mathcal{E}(\nu_n^\prime)
=\frac{|\mathcal{T}_n|}{|\mathcal{T}_n^\prime|}.
$$
This simple intuitive formula is also confirmed by a
straightforward computation which uses (\ref{pmf}). It shows that
\begin{equation}\label{enupr}
\mathcal{E}(\nu_n^\prime) =\sum_{k=1}^n \frac{kn^{n-k}}{(n-k)!A_n}
=\frac{n^n}{(n-1)!A_n}.
\end{equation}
Applying Stirling's formula and (\ref{indec}) to the right-hand
side of (\ref{enupr}), we obtain
$$
\mathcal{E}(\nu_n^\prime)
=\sqrt{\frac{2n}{\pi}}\left(1+O\left(\frac{1}{\sqrt{n}}\right)\right)
=\sqrt{\frac{2n}{\pi}}+O(1), \quad n\to\infty,
$$
as required. \hfill $\Box$
\end{proof}

We show next that an elementary argument leads to a surprisingly
simple exact expression for the second moment of $\nu_n^\prime$.

\begin{lemma}
For any fixed $m\ge 2$, we have $\mathcal{E}(\nu_m^{\prime 2})=m$.
\end{lemma}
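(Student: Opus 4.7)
The plan is to compute $\mathcal{E}(\nu_m^{\prime 2})$ directly from the probability mass function (\ref{pmf}). Writing
\[
\mathcal{E}(\nu_m^{\prime 2})=\frac{1}{A_m}\sum_{k=1}^{m}\frac{k^{2}\,m^{m-k}}{(m-k)!}
\]
and reindexing by $j=m-k$, the claim $\mathcal{E}(\nu_m^{\prime 2})=m$ is equivalent to the finite identity
\[
\sum_{j=0}^{m-1}\frac{(m-j)^{2}\,m^{j}}{j!}=m\,A_{m}.
\]
My strategy is to prove this by combining a single telescoping identity with a summation by parts.

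The key observation is that, setting $f(j):=m^{j+1}/j!$, one has the discrete-derivative relation
\[
\frac{(m-j)\,m^{j}}{j!}=\frac{m^{j+1}-j\,m^{j}}{j!}=f(j)-f(j-1)\qquad(j\ge 1),
\]
which is the same cancellation already implicit in the closed form (\ref{enupr}). Factoring the target summand as $(m-j)\bigl[f(j)-f(j-1)\bigr]$ for $j\ge 1$ and applying Abel's summation, the reindexing $\sum_{j=1}^{m-1}(m-j)f(j-1)=\sum_{i=0}^{m-2}(m-1-i)f(i)$ makes the interior weights collapse to $(m-j)-(m-1-j)=1$, so that only the two boundary contributions $f(m-1)$ and $(m-1)f(0)$ survive outside the telescope. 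Recombining should give
\[
\sum_{j=1}^{m-1}\frac{(m-j)^{2}\,m^{j}}{j!}=\sum_{j=0}^{m-1}f(j)-m\,f(0)=m\,A_{m}-m^{2}.
\]

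Finally, the $j=0$ term of the original sum equals $m^{2}$, which cancels the $-m^{2}$ above and leaves exactly $m\,A_{m}$; division by $A_{m}$ then yields the claim. The one step that calls for genuine care is the summation-by-parts bookkeeping: one must track both boundary contributions at $j=m-1$ and $j=0$ after reindexing, and must handle the $j=0$ summand of the original sum separately because the identity $(m-j)m^{j}/j!=f(j)-f(j-1)$ is only valid for $j\ge 1$. Apart from that, the argument is purely algebraic and the cancellation is exact for every $m\ge 2$, which is what produces the surprisingly clean value.
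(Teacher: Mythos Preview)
Your proof is correct. Both you and the paper carry out a direct algebraic computation of the sum $\sum_{j=0}^{m-1}(m-j)^{2}m^{j}/j!$, but you organize it differently. The paper expands $(m-j)^{2}=m^{2}-2mj+j^{2}$, splits into three sums, and shifts indices in each piece until everything collapses to $mA_{m}$. You instead factor $(m-j)^{2}=(m-j)\cdot(m-j)$ and exploit the telescoping identity $(m-j)m^{j}/j!=f(j)-f(j-1)$ with $f(j)=m^{j+1}/j!$, which is precisely the cancellation behind the first-moment formula~(\ref{enupr}); Abel summation then reduces the second moment to $\sum_{j}f(j)$ plus a boundary correction. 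Your route is a little more conceptual in that it recycles the first-moment structure rather than starting from scratch, and it makes the appearance of $mA_{m}$ more transparent (since $\sum_{j=0}^{m-1}f(j)=mA_{m}$ on the nose). The paper's expansion is more pedestrian but requires no auxiliary identity. One small point of presentation: your description of the boundary terms is slightly loose (the $j=m-1$ contribution already has weight $1$ like the interior, so the only genuine boundary correction is $-(m-1)f(0)$, which combines with the missing $f(0)$ to give your stated $-mf(0)$), but the final displayed identity $\sum_{j=1}^{m-1}(m-j)^{2}m^{j}/j!=mA_{m}-m^{2}$ is correct.
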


\begin{proof}
Direct calculation shows that
\begin{eqnarray}
& & \mathcal{E}(\nu_m^{\prime 2})
=\frac{1}{A_m}\sum_{k=1}^m\frac{k^2 m^{m-k}}{(m-k)!}
=\frac{1}{A_m}\sum_{l=0}^{m-1}\frac{(m^2-2ml+l^2)m^l}{l!}
\nonumber \\
& & =\frac{1}{A_m}\left(\sum_{l=0}^{m-1}\frac{m^{l+2}}{l!}
-2\sum_{l=1}^{m-1}\frac{m^{l+1}}{(l-1)!}
+\sum_{l=2}^{m-1}\frac{m^l}{(l-2)!}
+\sum_{l=1}^{m-1}\frac{m^l}{(l-1)!}\right) \nonumber \\
& & =\frac{1}{A_m}\left(\frac{m^{m+1}}{(m-1)!}
+\sum_{l=2}^m\frac{m^l}{(l-2)!}
-2\sum_{l=2}^m\frac{m^l}{(l-2)!}\right)
\nonumber \\
& & +\frac{1}{A_m}\left(\sum_{l=2}^m\frac{m^l}{(l-2)!}
-\frac{m^m}{(m-2)!} +\sum_{l=1}^{m-1}\frac{m^l}{(l-1)!}\right)
\nonumber \\
& & =\frac{1}{A_m}\left(\frac{m^m}{(m-1)!}
+\sum_{l=1}^{m-1}\frac{m^l}{(l-1)!}\right)
=\frac{1}{A_m}\sum_{l=1}^{m}\frac{m^l}{(l-1)!} \nonumber \\
& & =\frac{m}{A_m}\sum_{l=1}^{m}\frac{m^{l-1}}{(l-1)!}
=\frac{m}{A_m}\sum_{k=0}^{m-1}\frac{m^k}{k!} =m, \nonumber
\end{eqnarray}
where in the last equality we have used (\ref{a}). This completes
the proof of the lemma. \hfill  $\Box$
\end{proof}

Now, we return to the set $\mathcal{T}_n$ of unrestricted mappings
of $[n]$ into itself. In the computation of the first two moments
of $\nu_n$, we shall use conditional expectations. It is clear
that, for $m\le n$ and $j=1,2$, we have
$\mathbb{E}(\nu_n^j|\mu_n=m)=\mathcal{E}(\nu_m^{\prime j})$, where
$\mu_n$ stands for the largest component size of a mapping from
$\mathcal{T}_n$. Decomposing  $\mathbb{E}(\nu_n^j)$ into a
weighted sum of conditional expectations, we have
\begin{equation}\label{con}
\mathbb{E}(\nu_n^j) =\sum_{m=1}^n\mathbb{E}(\nu_n^j|\mu_n=m)
\mathbb{P}(\mu_n=m)=\sum_{m=1}^n\mathcal{E}(\nu_m^{\prime j})
\mathbb{P}(\mu_n=m).
\end{equation}
Hence, setting $j=2$ and applying Lemma 2, we obtain, for all
$n\ge 1$, the following curious identity:
\begin{equation}\label{id}
\mathbb{E}(\nu_n^2) =\mathbb{E}(\mu_n).
\end{equation}

Now, we proceed to the preliminaries concerning the largest
component of a random mapping. Arratia et al. \cite{ABT03}
developed a unifying approach to the study of the component
spectrum of a large parametric class of decomposable combinatorial
structures, called logarithmic structures. These structures
satisfy a condition, called there logarithmic. It introduces a
dependency on a parameter $\theta>0$. More precisely, this general
approach employs a random variable $\eta_\theta$ (see \cite[Lemma
4.7]{ABT03}). Its probability density function $p_\theta(x), x>0$,
is given by
\begin{equation}\label{pth}
p_\theta(x) =\frac{e^{-\gamma\theta}x^{\theta-1}}{\Gamma(\theta)}
\left(1+\sum_{k=1}^\infty \frac{(-1)^k}{2^k
k!}\int\ldots\int_{I_k(x)} (1-\sum_{j=1}^k y_j)^{-1/2}
\frac{dy_1\ldots dy_k}{y_1\ldots y_k}\right),
\end{equation}
where $\Gamma(.)$ denotes Euler's gamma function and $I_k(x)$ are
defined by (\ref{ik}). Setting $\theta=1/2$, we obviously obtain
the expression for $p(x)$ given by (\ref{p}). It is shown
\cite[Section 6.1]{ABT03} that random mappings satisfy the
logarithmic condition with this value of $\theta$. For the sake of
simplicity, in formulas (\ref{p}) and (\ref{df}) given in the
Introduction as well as in the material that we shall present
further, we omit the index $\theta$ and restrict ourselves to the
case of random mappings, where $\theta=1/2$. For more details, we
refer the reader to \cite[Sections 4.2 and 5.5]{ABT03}.

The next lemma provides a formula for the Laplace transform of the
function $p(x)$; see (\ref{p}). It is a particular case of Theorem
4.6 from \cite{ABT03}.

\begin{lemma}
We have
\begin{equation}\label{lapl}
\varphi(s):=\int_0^\infty e^{-sx}p(x)dx
=\exp{\left(-\frac{1}{2}\int_0^1\frac{1-e^{-sy}}{y}dy\right)}
=\frac{e^{-\gamma/2}}{\sqrt{s}}e^{-\frac{1}{2}E_1(s)},
\end{equation}
where $p(x)$ is given by (\ref{p}) and $E_1(s)$ denotes the
exponential integral function introduced in Theorem 1(ii).
\end{lemma}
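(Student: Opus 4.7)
The plan is to split the statement into the two asserted equalities and address each separately.

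For the first equality, I would invoke the general Laplace-transform computation of the logarithmic-class framework developed in \cite{ABT03}. Theorem~4.6 there shows that, for every parameter $\theta>0$ satisfying the logarithmic condition, the density $p_\theta(x)$ defined as in (\ref{pth}) has Laplace transform $\exp\bigl(-\theta\int_0^1 (1-e^{-sy})/y\,dy\bigr)$. Since the authors have already observed, following \cite[Section~6.1]{ABT03}, that random mappings correspond to $\theta=1/2$ and that $p(x)=p_{1/2}(x)$, the first equality in (\ref{lapl}) is obtained by direct specialization, with no work beyond matching the parameters.

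For the second equality, I would first perform the change of variable $t=sy$ to rewrite
$$\int_0^1\frac{1-e^{-sy}}{y}\,dy=\int_0^s\frac{1-e^{-t}}{t}\,dt,$$
and then apply the standard identity
$$\int_0^s\frac{1-e^{-t}}{t}\,dt=E_1(s)+\gamma+\ln s,$$
which can be verified by differentiating both sides (each derivative equals $-e^{-s}/s$) and matching the limit as $s\to 0^+$ using the expansion $E_1(s)=-\gamma-\ln s+O(s)$. Multiplying by $-1/2$, exponentiating, and separating out the $\ln s$ and $\gamma$ contributions yields the product form $e^{-\gamma/2}s^{-1/2}e^{-E_1(s)/2}$.

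The main obstacle would arise only if one wanted a self-contained proof of the first equality: one would then have to take the Laplace transform of the alternating series in (\ref{p}) term by term, interchange the $x$-integration with the integrals over the $x$-dependent domains $I_k(x)$, and resum the resulting expressions into an exponential. That step requires a careful dominated- or absolute-convergence argument because of the alternating signs. Since \cite[Theorem~4.6]{ABT03} already supplies this computation in full generality, however, in the present setting the lemma reduces to the routine calculus check above.
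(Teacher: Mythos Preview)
Your approach matches the paper's exactly: the first equality is cited as the $\theta=1/2$ case of \cite[Theorem~4.6]{ABT03}, and the second is deduced from the classical identity $\int_0^s\frac{1-e^{-t}}{t}\,dt=E_1(s)+\ln s+\gamma$ (the paper simply cites \cite[Section~5.1]{AS65} for this). One minor slip in your parenthetical: the common derivative of both sides of that identity is $(1-e^{-s})/s$, not $-e^{-s}/s$.
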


We only notice here that the last equality in (\ref{lapl}) follows
from the classical identity
$$
\int_0^s\frac{1-e^{-y}}{y}dy =E_1(s)+\log{s}+\gamma, \quad s>0;
$$
see, e.g., \cite[Section 5.1]{AS65}.

Watterson \cite{W76} observed that $p(x)$ satisfies the delay
differential equation
\begin{equation}\label{dde}
xp^\prime(x)+\frac{1}{2}p(x)+\frac{1}{2}p(x-1)=0\quad\text\quad
\text{for}\quad x>1,\quad p(x)=\frac{e^{-\gamma/2}}{\sqrt{\pi
x}}\quad\text{for}\quad 0<x\le 1.
\end{equation}

{\it Remark.} For decomposable combinatorial structures with
parameter $\theta$, the delay differential equation (\ref{dde})
becomes
$$
xp_\theta^\prime(x)+(1-\theta)p_\theta(x) +\theta p_\theta(x-1)=0,
$$
where $p_\theta(x)$ is the probability density function given by
(\ref{pth}). If $\theta=1$, then $p_1(1/x)$ is a distribution
function which describes the asymptotic behavior of the largest
prime factor of a random integer and the size of the longest cycle
of a random permutation of $n$ letters. For more details, we refer
the reader to \cite{V86,VS77}.

From (\ref{df}) and (\ref{dde}) one can easily deduce the limiting
probability density function $f(x)$ of $\mu_n/n$. We have
\begin{equation}\label{f}
f(x)=\frac{1}{2}e^{\gamma/2}\sqrt{\pi}x^{-3/2}
p\left(\frac{1}{x}-1\right), \quad 0<x\le 1.
\end{equation}
In the Introduction, we already stated the integral limit theorem
for $\mu_n/n$; see (\ref{mu}) and (\ref{df}). Arratia et al.
\cite[Lemma 5.9]{ABT03} derived also a local limit theorem for
$\mu_n$, which we shall essentially use in the proof of Theorem 1.
It is stated as follows.

\begin{lemma}
Suppose that $m\le n$ satisfies
$\frac{m}{n}\to x\in (0,1)$ as $n\to\infty$. Then
$$
\mathbb{P}(\mu_n=m)=\frac{1}{n}f(x)(1+o(1)),
$$
where $f(x)$ is given by (\ref{f}).
\end{lemma}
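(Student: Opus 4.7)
The plan is to count the mappings in $\mathcal{T}_n$ whose largest component has size exactly $m$, then carry out an asymptotic analysis calibrated against Watterson's delay-differential equation (\ref{dde}).

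First consider the easier range $x=m/n>1/2$. Here the component of size $m$ is necessarily unique, so choosing its vertex set ($\binom{n}{m}$ ways), equipping it with one of the $|\mathcal{T}_m^\prime|$ indecomposable mappings, and placing an arbitrary mapping on the remaining $n-m$ vertices yields the exact identity
\begin{equation*}
n^n\mathbb{P}(\mu_n=m)=\binom{n}{m}\,|\mathcal{T}_m^\prime|\,(n-m)^{n-m}.
\end{equation*}
Stirling's formula combined with (\ref{indec}) gives $|\mathcal{T}_m^\prime|\sim\tfrac{1}{2}\sqrt{2\pi/m}\,m^m$, and a routine simplification produces $\mathbb{P}(\mu_n=m)\sim 1/(2nx\sqrt{1-x})$. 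Because $0<1/x-1<1$ throughout this range, the initial piece of (\ref{dde}) furnishes $p(1/x-1)=e^{-\gamma/2}/\sqrt{\pi(1/x-1)}$; substitution into (\ref{f}) gives $f(x)=1/(2x\sqrt{1-x})$, matching the claim.

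For $0<x\le 1/2$, the counting above must be replaced by an inclusion-exclusion over the number $k\ge 1$ of components that simultaneously attain the maximum size $m$. The $k$-th term takes the form
\begin{equation*}
\frac{n!}{k!\,(m!)^k\,(n-km)!}\,|\mathcal{T}_m^\prime|^k\,L_{n-km}(m-1),
\end{equation*}
where $L_j(m-1)$ denotes the number of mappings of $[j]$ with all components of size less than $m$. I would estimate $L_j(m-1)$ by bootstrapping from the range already handled, then pass to the limit $m=\lfloor nx\rfloor$ so that the $k$-th summand reproduces, after applying Stirling and (\ref{indec}) again, exactly the $k$-th integral in the series expansion (\ref{p}) for $p$. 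Equivalently, one may show that $n\mathbb{P}(\mu_n=\lfloor nx\rfloor)$ satisfies a discrete recursion whose continuum limit is Watterson's equation (\ref{dde}), and conclude by induction on $\lfloor 1/x\rfloor$, using Step~1 as the base case $x>1/2$.

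The principal obstacle is this second step: uniformly controlling the inclusion-exclusion as $k$ grows demands careful asymptotics for the capped count $L_j(m-1)$ and a re-expression of the sum that matches the infinite series form of $p$. A cleaner alternative, carried out in \cite[Section~5.5]{ABT03}, is a saddle-point analysis of the generating function $1/(1-T(z))$ (with $T(z)$ the rooted-tree function) under the maximum-component constraint; that route delivers the local limit theorem in one step, uniformly for $x$ in compact subsets of $(0,1)$.
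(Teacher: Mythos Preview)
The paper does not prove this lemma at all: it is quoted verbatim as Lemma~5.9 of \cite{ABT03}, with no argument supplied. So there is no ``paper's own proof'' to compare against; the only question is whether your sketch stands on its own.

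Your treatment of the range $x>1/2$ is correct and self-contained. The counting identity is exact because $m>n/2$ forces uniqueness of the size-$m$ component, and the Stirling/(\ref{indec}) asymptotics and the match with (\ref{f}) via the initial condition in (\ref{dde}) are all right. This is already more than the paper offers.

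For $0<x\le 1/2$ you have an outline rather than a proof, as you yourself say. One point of friction: the formula you display, with the capped count $L_{n-km}(m-1)$, is a \emph{direct} decomposition by the exact number $k$ of components attaining the maximum size, not an inclusion--exclusion; it carries no alternating signs. The series (\ref{p}) for $p$, on the other hand, is genuinely alternating. To recover (\ref{p}) term-by-term you would instead overcount with an unrestricted mapping on the complement and then correct, or else first show that your direct sum satisfies a discrete analogue of (\ref{dde}) and argue by induction on $\lfloor 1/x\rfloor$ as you suggest. Either route is viable but neither is carried out, and in the end you fall back on the saddle-point argument of \cite[Section~5.5]{ABT03}---which is precisely the citation the paper itself relies on. So for $x\le 1/2$ your proposal and the paper are, in effect, the same appeal to \cite{ABT03}.
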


\section{Proof of Theorem 1}

{\it Proof of part (i).} The essential part of the argument in
this proof is based on the relationship between the probabilities
$\mathbb{P}$ defined on the set $\mathcal{T}_n$, conditioned upon
events $\{\mu_n=m\}$, and the unconditional probabilities
$\mathcal{P}$ defined on the subset
$\mathcal{T}_n^\prime\subset\mathcal{T}_n$. In particular, for any
$1\le m\le n$ and $1\le N\le m$, we have $\mathbb{P}(\nu_n\le
N|\mu_n=m)=\mathcal{P}(\nu_m^\prime\le N)$. The total probability
formula implies that
\begin{eqnarray}\label{nudf}
& & \mathbb{P}(\nu_n\le N) =\sum_{m=1}^n\mathbb{P}(\nu_n\le
N|\mu_n=m)\mathbb{P}(\mu_n=m) \nonumber \\
& & =\sum_{m=1}^n\mathcal{P}(\nu_m^\prime\le N)\mathbb{P}(\mu_n=m)
=\Sigma_1+\Sigma_2,
\end{eqnarray}
where in the last equality we have decomposed the underlying
probability into two sums in the following way. For a function
$\omega(n)$ satisfying $\omega(n)\to\infty$ and $\omega(n)=o(n)$
as $n\to\infty$, we set
\begin{equation}\label{nuone}
\Sigma_1=\sum_{1\le m\le\omega(n)}\mathcal{P}(\nu_m^\prime\le
N)\mathbb{P}(\mu_n=m),
\end{equation}

\begin{equation}\label{nutwo}
\Sigma_2=\sum_{\omega(n)< m\le n}\mathcal{P}(\nu_m^\prime\le
N)\mathbb{P}(\mu_n=m).
\end{equation}
For $\Sigma_1$, we easily obtain the bound
\begin{equation}\label{sigone}
\Sigma_1\le\sum_{1\le m\le\omega(n)}\mathbb{P}(\mu_n=m)
=\mathbb{P}(\mu_n\le\omega(n))
=\mathbb{P}\left(\frac{\mu_n}{n}\le\frac{\omega(n)}{n}\right).
\end{equation}
In view of (\ref{mu}) and (\ref{df}), we approximate the last
probability in (\ref{sigone}) by $F(\omega(n)/n)$. To complete the
estimate, we use that $\lim_{x\to\infty}xp(x)=0$; see \cite[p.
85]{ABT03}. Setting in (\ref{df}) $x=n/\omega(n)$, we obtain
$$
F(\omega(n)/n) =\frac{e^{\gamma/2}\sqrt{\pi}}{\sqrt{n/\omega(n)}}
(n/\omega(n))p(n/\omega(n))
=o\left(\sqrt{\frac{\omega(n)}{n}}\right),
$$
since $n/\omega(n)\to\infty$ as $n\to\infty$. Combining this with
(\ref{sigone}), we conclude that
\begin{equation}\label{oneest}
\Sigma_1 =o\left(\sqrt{\frac{\omega(n)}{n}}\right)=o(1), \quad
n\to\infty.
\end{equation}
In the second sum $\Sigma_2$, we replace first the probabilities
$\mathcal{P}(\nu_m^\prime=k)$ by their probability mass
expressions given by (\ref{pmf}). Thus $\Sigma_2$ is converted to
\begin{eqnarray}\label{sigtwo}
& & \Sigma_2= \sum_{\omega(n)<m\le n}\mathbb{P}(\mu_n=m)\sum_{1\le
k\le
N}\mathcal{P}(\nu_m^\prime=k) \nonumber \\
& & =\sum_{\omega(n)<m\le n}\mathbb{P}(\mu_n=m)\sum_{1\le k\le N}
\frac{m^{m-k}}{(m-k)!A_m} \nonumber \\
& & =\sum_{\omega(n)<m\le n}\mathbb{P}(\mu_n=m)\sum_{m-N\le l\le
m-1} \frac{m^l}{l!A_m}.
\end{eqnarray}
Further on, for any $a>0$, by $Po(a)$, we denote a
Poisson-distributed random variable with mean $a$. Moreover, let
$\mathbf{P}$ be the Lebesgue measure on the Borel subsets of
$[0,\infty)$. To estimate the inner sum in the last line of
(\ref{sigtwo}), we use (\ref{indec}) and apply again the normal
approximation of the Poisson distribution in conjunction with the
Berry-Esseen bound on the rate of convergence in this
approximation (see, e.g., \cite[Chapter XVI, Section 5]{F66}).
Since $m>\omega(n)\to\infty$, we have
\begin{eqnarray}\label{in}
& & \sum_{m-N\le l\le m-1} \frac{m^l e^{-m}}{l!A_m e^{-m}}
=\frac{1}{1/2+O(1/\sqrt{m})}\mathbf{P}(m-N\le Po(m)<m) \nonumber
\\
& & =2(1+O(1/\sqrt{m})) \mathbf{P}\left(-\frac{N}{\sqrt{m}}\le
\frac{Po(m)-m}{\sqrt{m}}<0\right) \nonumber \\
& & =(1+o(1))(2\Phi(N/\sqrt{m})-1),
\end{eqnarray}
where $\Phi(x)$ is the standard normal distribution function.
Going back to (\ref{sigtwo}), we apply the result of Lemma 4 and
replace the inner sum of its last line by the right-hand side
expression of (\ref{in}). For any $y>0$, we also set
\begin{equation}\label{y}
N=\sqrt{yn}.
\end{equation}
Thus, recalling (\ref{chi}), we find that
\begin{eqnarray}\label{twoest}
& & \Sigma_2 =(1+o(1))\sum_{\omega(n)/n<m/n\le 1}\frac{1}{n}
f(m/n)(2\Phi(\sqrt{y}/\sqrt{m/n})-1) \nonumber \\
& & =\int_0^1 f(x)G(y/x)dx +o(1),
\end{eqnarray}
since the sum in the middle of (\ref{twoest}) is a Riemann sum
with step size $1/n$ of the integral in its right-hand side. Now,
combining (\ref{nudf}) - (\ref{nutwo}), (\ref{oneest}), (\ref{y})
and (\ref{twoest}), we observe that
\begin{equation}\label{dfend}
\mathbb{P}\left(\frac{\nu_n}{\sqrt{n}}\le\sqrt{y}\right) =\int_0^1
f(x)G(y/x)dx +o(1).
\end{equation}
It is easily seen that the left-hand side probability in
(\ref{dfend}) represents the distribution function of the ratio
$\nu_n^2/n$. Furthermore, (\ref{chi}) and (\ref{f}) imply that the
integral in the right-hand side equals the distribution function
of $\chi^2(1)\mu$, where the random variables in this product are
independent. In other words, we find from (\ref{dfend}) that
$\nu_n^2/n\to_d\chi^2(1)\mu$, which obviously gives
$\nu_n/\sqrt{n}\to_d\sqrt{\chi^2(1)\mu}$ and completes the proof
of part (i). \hfill $\Box$

{\it Proof of part (ii).} In (\ref{con}) we first set $j=1$ and
then divide its first and third part by $\sqrt{n}$. Breaking up
the range of summation in it, we write
\begin{equation}\label{ss}
\frac{1}{\sqrt{n}}\mathbb{E}(\nu_n)=\frac{1}{\sqrt{n}}(S_1+S_2),
\end{equation}
where
\begin{equation}\label{sone}
S_1 =\sum_{1\le m\le\frac{1}{4}\log{n}} \mathcal{E}(\nu_m^\prime)
\mathbb{P}(\mu_n=m)
\end{equation}
and
\begin{equation}\label{stwo}
S_2=\sum_{\frac{1}{4}\log{n}<m\le n} \mathcal{E}(\nu_m^\prime)
\mathbb{P}(\mu_n=m).
\end{equation}
As in the proof of part (i), $Po(m)$ denotes a Poisson-distributed
random variable with mean $m$. Recall also that $\mathbf{P}$
denotes the Lebesgue measure on the Borel subsets of $[0,\infty)$.
Then, for any $m$ satisfying $1\le m\le n$, in view of (\ref{a})
and (\ref{enupr}) we have
\begin{equation}\label{num}
\mathcal{E}(\nu_m^\prime)=\frac{m^m
e^{-m}}{(m-1)!\mathbf{P}(Po(m)\le m-1)}.
\end{equation}
To obtain an upper bound for $\mathcal{E}(\nu_m^\prime)$, we
proceed as follows. We multiply the numerator and the denominator
in the right-hand side of (\ref{num}) by $\sqrt{2\pi
m}e^{1/(12m+1)}$ and apply the following well-known inequality for
$m!$:
$$
m!>m^m e^{-m}\sqrt{2\pi m}e^{1/(12m+1)};
$$
see, for example, \cite[Chapter II, Section 9]{F57}. Furthermore,
note that, for $m\ge 1$, the Poisson probability in the
denominator of (\ref{num}) is uniformly bounded from below by
$\mathbf{P}(Po(m)=0)=e^{-m}$. Hence, for all $m, 1\le m\le n$, we
have
$$
\mathcal{E}(\nu_m^\prime)<\frac{m!e^m}{(m-1)!\sqrt{2\pi
m}e^{1/(12m+1)}}< \sqrt{\frac{m}{2\pi}}e^m.
$$
It follows from this, for large enough $n$, that
\begin{eqnarray}\label{neg}
& & \frac{1}{\sqrt{n}}S_1\le\frac{1}{\sqrt{2\pi}}
\sum_{m\le\frac{1}{4}\log{n}}\sqrt{\frac{m}{n}}
e^{\frac{1}{4}\log{n}}\mathbb{P}(\mu_n=m) \nonumber
\\
& & \le\frac{1}{2\sqrt{2\pi
n}}n^{1/4}\sqrt{\log{n}}\sum_{m\le\frac{1}{4}\log{n}} 1 =
O(n^{-1/4}\log^{3/2}{n})=o(1).
\end{eqnarray}
The estimate of $\frac{1}{\sqrt{n}}S_2$ follows from Lemmas 1 and
4. For large enough $n$, we have
\begin{eqnarray}\label{stwoest}
& & \frac{1}{\sqrt{n}} S_2 =\sqrt{\frac{2}{\pi}}
\sum_{\frac{1}{4}\log{n}<m\le n} \sqrt{\frac{m}{n}} \frac{1}{n}
f\left(\frac{m}{n}\right) (1+o(1)) \nonumber \\
& & +O\left(\frac{1}{\sqrt{n}} \sum_{\frac{1}{4}\log{n}<m\le n}
\frac{1}{n} f\left(\frac{m}{n}\right)\right).
\end{eqnarray}
The sums in the right-hand side of (\ref{stwoest}) are Riemann
sums of the integrals
\begin{equation}\label{i}
I:=\sqrt{\frac{2}{\pi}}\int_0^1\sqrt{x}f(x)dx
\end{equation}
and
\begin{equation}\label{one}
\int_0^1f(x)dx=1
\end{equation}
with step size $1/n$. Hence, combining (\ref{ss}) - (\ref{stwo})
and (\ref{neg}) - (\ref{one}), we obtain
\begin{equation}\label{enu}
\frac{1}{\sqrt{n}}\mathbb{E}(\nu_n)=I+o(1), \quad n\to\infty.
\end{equation}
To complete the proof, it remains to evaluate the integral $I$. We
first replace $f(x)$ by its expression (\ref{f}). Then, we set in
(\ref{i}) $y=\frac{1}{x}-1$. Recall that $p(x)$, defined in the
Introduction by (\ref{p}), is the probability density function of
a random variable $\eta>0$. Hence we can rewrite (\ref{i}) as
follows:
\begin{equation}\label{me}
I=\frac{e^{\gamma/2}}{\sqrt{2}}\int_0^\infty\frac{p(y)}{1+y}dy
=\frac{e^{\gamma/2}}{\sqrt{2}}\mathbf{E}((1+\eta)^{-1}).
\end {equation}
The Laplace transform $\varphi(s)$ of $\eta$ is given in Lemma 3
by both right side expressions of (\ref{lapl}). Furthermore, an
obvious computation shows that
\begin{equation}\label{mel}
\int_0^\infty e^{-s}\varphi(s)ds=\mathbf{E}((1+\eta)^{-1}).
\end{equation}
Combining the last expression for $\varphi(s)$ in (\ref{lapl})
with (\ref{me}) and (\ref{mel}), we obtain the required
representation of $I$. Using Mathematica, version 12.0, we obtain
\begin{equation}\label{fin}
I=0.6884050874956\ldots.
\end{equation}
Combining (\ref{enu}) and (\ref{fin}) completes the proof of part
(ii). \hfill     $\Box$

{\it Proof of part (iii).} From (\ref{id}) we see that
$$
\lim_{n\to\infty}\frac{1}{n}\mathbb{E}(\nu_n^2)
=\lim_{n\to\infty}\frac{1}{n}\mathbb{E}(\mu_n)=0.7578230112\ldots,
$$
where the last numerical value was found by Gourdon \cite[p.
152]{G96} (see also \cite[Table 5.1]{ABT03}). Since
$\frac{1}{n}\mathbb{V}ar(\nu_n)
=\frac{1}{n}(\mathbb{E}(\nu_n^2)-(\mathbb{E}(\nu_n))^2)$, the
numerical result of part (iii) follows again from (\ref{enu}) and
(\ref{fin}). Thus the proof is complete. \hfill $\Box$

\section{Concluding Remarks}

As a subject of further research, we propose possible extensions
of both results of Theorem 1. First, the limit approximation of
$\nu_n/\sqrt{n}$ obtained in part (i) suggests in a natural way
the study of rate estimates. In our setting, this requires a
sharper bound on the error term of Lemma 4 (see also the proof of
Lemma 5.9 given in \cite[p. 112]{ABT03}). Some facts from the
theory of asymptotic decompositions related to the central limit
theorem \cite[Chapter XVI]{F66} could be also used. The result of
part (ii) may be strengthened by establishing a uniform asymptotic
expansion for $\mathbb{E}(\nu_n)$. Such an expansion allows one to
compute this expectation with great accuracy even if $n$ is not
too large. The unifying approach developed by Gourdon \cite{G96}
may be used here. The proof should also contain some necessary
details on the singularity analysis of the underlying generating
function. Both aforementioned extensions of Theorem 1 offer
serious technical difficulties. These problems may be a subject of
a subsequent study. In this paper, we prefer to keep the text as
simple as possible in order to make it accessible to a wider
audience.

We shall present now some interpretations which involve a sampling
experiment. Suppose that a vertex $i\in[n]$ of the graph $G_T,
T\in\mathcal{T}_n$, is chosen uniformly at random. The probability
that $i$ possesses a certain property (e.g., $i$ is a cyclic
vertex, $i$ belongs to the largest component of $G_T$, etc.) can
be computed directly, using the total probability formula. For
example, the probability that a randomly chosen vertex is cyclic
equals $\sum_{k=1}^n\frac{k}{n}\mathbb{P}(\lambda_n=k)
=\frac{1}{n}\mathbb{E}(\lambda_n)$ (recall that $\lambda_n$ is the
total number of cyclic vertices in $G_T$). In a similar way, one
can interpret the ratio $\mathbb{E}(\nu_n)/\mathbb{E}(\lambda_n)$
as the limiting conditional probability that a randomly chosen
cyclic vertex belongs to the largest component (deepest cycle). It
is well-known that $\mathbb{E}(\lambda_n)\sim\sqrt{\pi n/2}$ as
$n\to\infty$; for example, see \cite[Section 6.3]{S97}. Combining
this asymptotic equivalence with the numerical result of Theorem
1(i), we obtain the approximate value of this probability, namely,
\begin{equation}\label{cond}
\lim_{n\to\infty}\frac{\mathbb{E}(\nu_n)}{\mathbb{E}(\lambda_n)}
=\lim_{n\to\infty}\sqrt{\frac{2}{\pi n}} \mathbb{E}(\nu_n)\approx
0.5493.
\end{equation}

Now, consider the length $\kappa_n$ of the longest cycle of a
random mapping from $\mathcal{T}_n$. Purdom and Williams
\cite{PW68} obtained a general asymptotic formula for the $m$th
moment of $\kappa_n$, $m=1,2,\ldots$, and computed the first five
terms of the asymptotic expansion of $\mathbb{E}(\kappa_n)$. For
$1\le n\le 50$, they showed that the sum
$$
0.7824816 n^{1/2}+0.104055+0.0652068n^{-1/2}-0.1052117n^{-1}
+0.0416667n^{-3/2}
$$
is a good approximation for $\mathbb{E}(\kappa_n)$ and that
$\lim_{n\to\infty}\frac{1}{\sqrt{n}}\mathbb{E}(\kappa_n) \approx
0.7825$. Hence, the limiting conditional probability that a
randomly chosen cyclic vertex belongs to the longest cycle is
\begin{equation}\label{conl}
\lim_{n\to\infty}\frac{\mathbb{E}(\kappa_n)}{\mathbb{E}(\lambda_n)}
\approx 0.6243.
\end{equation}
The difference between (\ref{conl}) and (\ref{cond}) is
approximately equal to $0.075$. It can be interpreted as the
approximate limiting probability that the longest cycle and the
largest component of $G_T$ are disjoint. Finch \cite{F22} called the
component containing the longest cycle of a random mapping {\it
richest} component. In this terminology, the difference $0.075$
equals the approximate limiting probability that the richest
component is not the largest one. The problem concerning the average
size of the richest component remains unsolved.

Apropos our last remark, we propose another open problem related
to the size $\tau_n$ of the largest tree in a random mapping from
$\mathcal{T}_n$. Since $\tau_n$ does not exceed the size of the
component to which the largest tree belongs and $\mu_n$ is the
maximum component size of $T\in\mathcal{T}_n$, for all $n\ge 1$,
we have $\tau_n\le\mu_n$. The limiting distribution function of
$\tau_n/n$ as $n\to\infty$ was first determined by Stepanov
\cite{S69}. There is another probabilistic proof of this result
due to Pavlov \cite{P77} (see also \cite[Section 3.3]{K86}). The
following natural question arises: what can be said about the
probability that the largest tree is a subgraph of the largest
component of a random mapping? It seems the limit theorems from
\cite{S69,P77} would be helpful to obtain an asymptotic estimate
for this probability.

\section*{Acknowledgements}

We are grateful to the anonymous referees for their valuable
comments and suggestions on the earlier draft of this paper. The
first author is also grateful to Emil Kamenov and Mladen Savov for
the helpful discussions. He was partially supported by Project
KP-06-N32/8 with the Bulgarian Ministry of Education and Science.


\begin{thebibliography}{15}

\bibitem{AS65}
M. Abramovitz, I. A. Stegun, Handbook of Mathematical Functions
with Formulas, Graphs and Mathematical Tables, Dover Publ. Inc.,
New York, 1965.

\bibitem{ABT03}
R. Arratia, A. Barbour, S. Tavar\'{e}, Logarithmic Combinatorial
Structures: A Probabilistic Approach, European Mathematical
Society, Z\"{u}rich, 2003.

\bibitem{BGW18}
Z. Bao, J. Guo, L. Wang, Functional graphs and their applications
in generic attacks on iterated hash constructions, IACR Trans. on
Symmetric Cryptology 2018 (2018) 201--253.

\bibitem{BP81}
R. P. Brent, J. M. Pollard, Factorization of the eighth Fermat
number, Math. Comput. 36 (1981) 627--630.

\bibitem{F57}
W. Feller, An Introduction to Probability Theory and Its
Applications, Volume I, John Wiley, New York, 1957.

\bibitem{F66}
W. Feller, An Introduction to Probability Theory and Its
Applications, Volume II, 2nd edition, John Wiley, New York, 1972.

\bibitem{F22}
S. Finch, Components and cycles of random mappings, available at
arXiv:2205.05579 (2022).

\bibitem{FO90}
P. Flajolet, A. M. Odlyzko, Random mapping statistics, Advances in
Cryptotogy -- EUROCRYPT'89, J.--J. Quisquarter and J. Vanderwalle,
eds., Lect. Notes Comp. Sci. 434 (1990) 329--354.

\bibitem{G96}
X. Gourdon, Combinatoire, Algorithmique et G\'{e}om\'{e}trie des
Polyn\^{o}mes, Ph. D. Thesis, \'{E}cole Polytechnique, 1996.
Available at:
\url{https://www.mit.edu/~sfinch/Xavier_Gourdon_-_Combinatoire,  
algorithmique_et_geometrie_des_polynomes_-_Ecole_Polytechnique_thesis.pdf}

\bibitem{K55}
L. Katz, Probability of indecomposability of a random mapping
function, Ann. Math. Statist. 26 (1955) 512--517.

\bibitem{K98}
D. E. Knuth, The Art of Computer Programming, Volume 2:
Seminumerical Algorithms, 3rd edition, Addison--Wesley, Boston,
MA, 1998.

\bibitem{K76}
V. F. Kolchin, A problem of the allocation of particles into cells
and random mappings, Theory Probab. Appl. 21 (1976) 48--63.

\bibitem{K86}
V. F. Kolchin, Random Mappings, Optimization Software, Inc., New
York, 1986.

\bibitem{KLMSS16}
S. V. Konyagin, F. Luca, B. Mans, L. Mathieson, M. Sha, I. E.
Shparlinski, Functional graphs of polynomials over finite fields,
J. Combin. Theory Ser. B 116 (2016) 87--122.

\bibitem{P77}
Yu. L. Pavlov, The asymptotic distribution of the maximum tree
size in a random forest, Theory Probab. Appl.  22 (1977) 509--520.


\bibitem{P75}
J. M. Pollard, A Monte Carlo method for factorization, BIT
Numerical Math. 15 (1975) 331--334.

\bibitem{PW68}
P. W. Purdom, J. H. Williams, Cycle length in a random mapping
function, Trans. Amer. Math. Soc. 133 (1968) 547--551.

\bibitem{R59}
A. R$\acute{e}$nyi, On connected graphs I, Magyar Tud. Akad. Mat.
Kutat$\acute{o}$ Int. K$\ddot{o}$zl. 4 (1959) 385--388.

\bibitem{S97}
V. N. Sachkov, Probabilistic Methods in Combinatorial Analysis,
Encyclopedia Math. Appl., Vol. 56, Cambridge Univ. Press,
Cambridge, 1997.

\bibitem{S69}
V. E. Stepanov, Limit distributions of certain characteristics of
random mappings, Theory Probab. Appl. 14 (1969) 612--626.

\bibitem{V86}
A. M. Vershik, The asymptotic distributions of factorizations of
natural numbers into prime divisors, Soviet Math. Doklady 34
(1986), 57--61.

\bibitem{VS77}
A. M. Vershik, A. A. Shmidt, Limit measures arising in the
asymptotic theory symmetric groups, I, Theory Probab. Appl.
22(1977), 70--85.

\bibitem{W76}
G. A. Watterson, The stationary distribution of the
infinitely-many neutral alleles diffusion model, J. Appl. Probab.
13 (1976) 639--651.


\end{thebibliography}
\end{document}